\newcommand{\R}{\textnormal{I\kern-0.21emR}}
\newcommand{\N}{\textnormal{I\kern-0.21emN}}
\renewcommand{\geq}{\geqslant}
\renewcommand{\leq}{\leqslant}
\def\B{{\mathbb B}}
\def\YYint#1#2#3{{\setbox0=\hbox{$#1{#2#3}{\iint}$}
    \vcenter{\hbox{$#2#3$}}\kern-.51\wd0}}
\newtheorem*{theorem*}{Theorem}
\newtheorem{theorem}{Theorem}
\newtheorem{material}{material}
\newtheorem{definition}[material]{Definition}
\newtheorem{lemma}[material]{Lemma}
\def\O{{\Omega}}
\def\n{{\nabla}}
\def\p{{\varphi}}
\def\OT{{(0;T)\times \O}}
 \newcommandx{\christian}[2][1=]{\todo[linecolor=red,backgroundcolor=red!25,bordercolor=red,#1]{#2}}
 \newcommandx{\laura}[2][1=]{\todo[linecolor=blue,backgroundcolor=blue!25,bordercolor=blue,#1]{#2}}
 \newcommandx{\info}[2][1=]{\todo[linecolor=green,backgroundcolor=green!25,bordercolor=green,#1]{#2}}
 \newcommandx{\improvement}[2][1=]{\todo[linecolor=yellow,backgroundcolor=yellow!25,bordercolor=yellow,#1]{#2}}
  \newcommandx{\biblio}[2][1=]{\todo[linecolor=blue,backgroundcolor=magenta!25,bordercolor=blue,#1]{#2}}
 \numberwithin{equation}{section}
\begin{document}

\title{A note on the rearrangement of functions in time and on the parabolic Talenti inequality}


\author{Idriss Mazari\footnote{CEREMADE, UMR CNRS 7534, Universit\'e Paris-Dauphine, Universit\'e PSL, Place du Mar\'echal De Lattre De Tassigny, 75775 Paris cedex 16, France, (mazari@ceremade.dauphine.fr).}}
\date{ \today}

\maketitle
\begin{abstract}
Talenti inequalities are a central feature in the qualitative analysis of PDE constrained optimal control as well as in calculus of variations. The classical parabolic Talenti inequality states that if we consider the parabolic equation ${\frac{\partial u}{\partial t}}-\Delta u=f=f(t,x)$ then, replacing, for any time $t$, $f(t,\cdot)$ with its Schwarz rearrangement $f^\#(t,\cdot)$ increases the concentration of the solution in the following sense: letting $v$ be the solution of ${\frac{\partial v}{\partial t}}-\Delta v=f^\#$ in the ball, then the solution $u$ is less concentrated than $v$. This property can be rephrased in terms of the existence of a maximal element for a certain order relationship. It is natural to try and rearrange the source term not only in space but also in time, and thus to investigate the existence of such a maximal element when we rearrange the function with respect to the two variables. In the present paper we prove that this is not possible.
\end{abstract}

\noindent\textbf{Keywords:}  Optimisation, Optimal Control of PDEs, Rearrangement of functions, Talenti Inequality.

\medskip

\noindent\textbf{AMS classification:} 49J20, 49Q10.

\medskip

\noindent
\textbf{Acknowledgment:} The author was partially supported by the French ANR Project ANR-18-CE40-0013 - SHAPO on Shape Optimization and by the Project ”Analysis and simulation of optimal shapes - application to life sciences” of the Paris City Hall. The author would like to thank the anonymous referee for his or her numerous suggestions which helped improve the manuscript.

\section{Introduction and motivation}
\subsection{Scope of the paper and mathematical context}
In this paper we want to address some qualitative questions related to the time-rearrangement of functions in the context of optimal control and Talenti inequalities. Roughly speaking, it has been known since the seminal {paper } \cite{Talenti} that the spatial rearrangement (\emph{i.e.} the Schwarz rearrangement) of source terms in elliptic equations improved "concentration"-like properties. Before we make this statement more precise let us note that this work of Talenti has sparked an immense interest from the calculus of variations and optimisation community, leading to major developments, whether in calculus of variations, in optimal control or in fine comparison relations for parabolic and elliptic partial differential equations \cite{Alvino1986,Alvino1990,alvino1991,Alvino2010,AlvinoNitschTrombetti,AlvinoTrombettiLions,Bandle,Chiacchio,Hamel2011,Kesavan,Kesavan1988,Langford,Mazari2022,Mazari2022MINE,RakotosonMossino,Sannipoli2022,TrombettiVazquez,Vazquez}. For the time being we refer to the monograph \cite{Kawohl} and to the survey of Talenti himself \cite{Talenti2016}. In general these comparison principles are expressed in terms of \emph{concentration of solutions}, using the order relation $\prec$ defined as follows: for a domain $\O$, for any non-negative functions $f\,, g \in L^1(\O)$, we say that 
\begin{multline}\label{Eq:Comp1}f\prec g\text{ if, and only if, for any $V\in (0;\mathrm{Vol}(\O)),$ }\sup_{E\subset \O\,, \mathrm{Vol}(E)=V}\int_E f\leq \sup_{E\subset \O\,, \mathrm{Vol}(E)=V}\int_E g.
\end{multline}
This relation can be expressed using the Schwarz rearrangement, see Definitions \ref{De:1}-\ref{De:2} below. The content of any Talenti-type inequality is that if we consider a parabolic or an elliptic equation of the form $\mathcal Lu=f$ then we can compare the solution $u$ with a solution $\tilde u$ of a related  equation $\tilde{\mathcal L}\tilde u=\tilde f$ in the ball, where the tilde $\tilde\cdot$ simply means that certain coefficients of the equation were symmetrised.

While most of the works we cited above deal with rearrangements in space (\emph{i.e.} for certain criteria is it better to have symmetric in space source terms/advection matrices?) it is interesting to investigate the influence of \emph{time-rearrangement} of functions: if we are working with a parabolic equation, is there a \emph{good} way to rearrange the source term both in time and space? In this paper, we prove that the answer to this question is \emph{no} and that rearranging source terms in time can not yield as strong concentration results as rearranging source terms in space. 
\subsection{Rearrangement and order relation}
To fix notations, let a dimension $d\in \N\backslash\{0\}$ and a radius $R>0$ be fixed, and consider the ball $\O:=\B(0;R)$ in $\R^d$. {We will use the notation $\mathscr C^\infty(\O)$ to denote the set of infinitely differentiable functions in $\O$.}
\begin{definition}\label{De:1}
For any non-negative function $g\in L^2(\O)$, there exists a unique radially symmetric, non-negative, non-increasing function $g^\#\in L^2(\O)$ that has the same distribution function as $g$ \emph{i.e.}
\[ \forall t\geq 0\,, \mathrm{Vol}\left(\{g\geq t\}\right)=\mathrm{Vol}\left(\{g^\#\geq t\}\right).\] $g^\#$ is called the Schwarz rearrangement of $g$.
\end{definition}
There are two famous inequalities that are related to the Schwarz rearrangement:
\begin{enumerate}
\item First, the P\'{o}lya-Szeg\"{o} inequality, which states that, if $f\in W^{1,2}(\O)$ { is a non-negative function }, then $f^\#\in W^{1,2}(\O)$ and, furthermore, that we have 
\begin{equation}\label{Eq:PS}\int_\O\left| \n f^\#\right|^2\leq \int_\O |\n f|^2.\end{equation}
\item Second, the Hardy-Littlewood inequality: it states that, if $f\,, g\in L^1(\O)$ are non-negative functions then 
\begin{equation}\label{Eq:HL}\int_\O fg\leq \int_\O f^\#g^\#.
\end{equation}
\end{enumerate}
The Schwarz rearrangement allows to reformulate the comparison relation \eqref{Eq:Comp1}:
\begin{definition}\label{De:2}
For any non-negative $f,g\in L^2(\O)$, we say that $g$ dominates $f$, and we write $f\prec g$ if, and only if
\[\forall r\in (0;R)\,,\int_{\B(0;r)}f^\#\leq \int_{\B(0;r)}g^\#.\]
\end{definition}
It is easily checked that this definition is equivalent to \eqref{Eq:Comp1} since one can check that, by equi-measurability of $f$ and of its Schwarz rearrangement, and since $f^\#$ is radially non-increasing, there holds
\[ \forall V \in (0;\mathrm{Vol}(\O))\,, \sup_{E\subset \O\,, \mathrm{Vol}(V)}\int_\O f=\int_{\B(0;r_V)}f^\#\text{ with }\mathrm{Vol}(\B(0;r_V))=V.\]

\subsection{Parabolic model, problem under scrutiny and main result}
The model under scrutiny in this paper is a linear heat equation: for any $f\in L^\infty(\OT)$, we let $u_f$ be the only solution of the linear heat equation 
\begin{equation}\label{Eq:Main}
\begin{cases}
\frac{\partial u_f}{\partial t}-\Delta u_f=f&\text{ in }\OT\,, 
\\ u_f(t,\cdot)=0&\text{ on }(0;T)\times\partial \O\,, 
\\u_f(0,\cdot)=0&\text{ in }\O.\end{cases}
\end{equation}The classical isoperimetric parabolic inequality \cite{RakotosonMossino,Vazquez} asserts the following: denoting, for a given $f=f(t,x)$ the spatially rearranged function $f^\#$ as 
\[f^\#:\OT\ni (t,x)\mapsto (f(t,\cdot))^\#(x)\] we have
\[\forall t\in [0;T]\,, u_f(t,\cdot)\prec u_{f^\#}(t,\cdot).\]
This quite naturally leads to the question: can such estimates be reached when we rearrange $f$ not only in space, but also in time? Note that this Talenti inequality implies the existence of a maximal element for the order relation $\prec$ in the following sense: let $\delta:[0;T]\to (0;\mathrm{Vol}(\O))$ be a function that models a time-dependent volume constraint and consider the set 
\[ \mathcal F_\delta:=\left\{f\in L^\infty(\OT): 0\leq f\leq 1\text{ a.e.,  and for a.e. } t\in [0;T]\,, \int_\O f(t,\cdot)=\delta_t\right\}.\]Let $\overline f_\delta$ be defined as
\[\overline f_\delta:(t,x)\mapsto \mathds 1_{\B(0;r_{\delta(t)})}(x)\text{ where $r_{\delta(t)}$ is chosen so that $\mathrm{Vol}(\B(0;r_{\delta(t)}))=\delta(t)$}.
\]
If we define
\[ \mathcal H_\delta(T):=\{u_f(T,\cdot)\,, f\in \mathcal F_\delta\}\subset L^2(\O)\]
then the parabolic Talenti inequality implies that, for any $T>0$, $u_{\overline f_\delta}$ is a $\prec$-maximal element in $\mathcal H_\delta$.

Our question here is the following: can we obtain maximal elements in a wider class of source terms where, unlike in the definition of $\mathcal F_\delta$, we do not impose, for every time, a volume constraint? Let us thus introduce, for a given volume constraint $V_0\in (0;\mathrm{Vol}(\OT))$, the class of admissible controls 

\begin{equation}\label{Eq:Adm}\tag{$\bold{Adm}$}
\mathcal F:=\left\{f\in L^\infty(\OT)\,, 0\leq f\leq 1\text{ a.e., }\iint_\OT f=V_0\right\}.\end{equation} 
Our question is then: defining, for any $T>0$, 
\[ \mathcal H(T):=\left\{u_f(T,\cdot)\,, f\in \mathcal F\right\},\] \emph{does there exist a $\prec$-maximal element in $\mathcal H(T)$}? In other words, does there exists a $f^*\in \mathcal F$ such that:

\begin{equation}\label{Eq:Ibey}
\forall f \in \mathcal F\,, u_f(T,\cdot)\prec u_{f^*}(T,\cdot)?\end{equation}
Here, the answer is no:
\begin{theorem}\label{Th:Main}
There exists no $f^*\in \mathcal F$ such that \eqref{Eq:Ibey} holds.\end{theorem}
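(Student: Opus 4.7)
The plan is to argue by contradiction: assume that $f^*\in\mathcal F$ satisfies \eqref{Eq:Ibey} and derive an inconsistency. First, by the spatial parabolic Talenti inequality, the function $(f^*)^\#(t,x):=f^*(t,\cdot)^\#(x)$ lies in $\mathcal F$ and satisfies $u_{f^*}(T,\cdot)\prec u_{(f^*)^\#}(T,\cdot)$, so that replacing $f^*$ by $(f^*)^\#$ preserves property \eqref{Eq:Ibey}. Hence I may assume that $f^*(t,\cdot)$ is radially non-increasing in space at every time $t$. A maximum-principle argument applied to the radial derivative $\partial_r u_{f^*}$ then shows that $u_{f^*}(T,\cdot)$ is itself radially non-increasing on $\O$, so that $\sup_\O u_{f^*}(T,\cdot)=u_{f^*}(T,0)$.

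Next, I translate the $\prec$-maximality of $f^*$ into two simultaneous linear-maximization conditions on $\mathcal F$. Letting $r\to 0$ (respectively $r=R$) in Definition \ref{De:2} applied to the relation $u_f\prec u_{f^*}$, and combining with the radial monotonicity just obtained, yields that $f^*$ simultaneously maximizes over $\mathcal F$ the two linear functionals
\[ I_0(f):=u_f(T,0)=\iint_\OT f(s,y)\,K_1(s,y)\,ds\,dy,\qquad K_1(s,y):=p_\O(T-s,0,y), \]
and
\[ I_R(f):=\int_\O u_f(T,x)\,dx=\iint_\OT f(s,y)\,K_2(s,y)\,ds\,dy,\qquad K_2(s,y):=\int_\O p_\O(T-s,x,y)\,dx, \]
where $p_\O$ denotes the Dirichlet heat kernel of $\O$. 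Since $p_\O$ is real-analytic in the open parabolic cylinder, both $K_1$ and $K_2$ have Lebesgue-negligible level sets. A standard bang-bang argument then identifies the unique maximizers of $I_0$ and $I_R$ on $\mathcal F$ as $\mathds 1_{E_1}$ and $\mathds 1_{E_2}$, where $E_i:=\{K_i>c_i\}$ and $c_i$ is determined by $\iint_\OT \mathds 1_{E_i}=V_0$. Consequently $f^*=\mathds 1_{E_1}=\mathds 1_{E_2}$ almost everywhere.

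The final step is to show $E_1\neq E_2$. Since $K_i(s,\cdot)$ is radially non-increasing on $\O$ at each $s$, the slice $E_i\cap(\{s\}\times \O)$ is a centered ball $\B(0,\rho_i(s))$. The two radii behave in opposite ways as $s\to T^-$: on the one hand $K_1(s,y)=p_\O(T-s,0,y)\to 0$ for every fixed $y\neq 0$, so $\rho_1(s)\to 0$; on the other hand $K_2(s,y)$ is the survival probability at time $T-s$ of the killed Brownian motion in $\O$ started at $y$, hence $K_2(s,\cdot)\to 1$ uniformly on every compact subset of $\O$, and $\rho_2(s)\to R$. For $s$ sufficiently close to $T$ this forces $\rho_1(s)<\rho_2(s)$, so that $E_2\setminus E_1$ has positive measure, contradicting the identity $\mathds 1_{E_1}=\mathds 1_{E_2}$. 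I expect the main obstacle to be the very first step: radial symmetry of $u_{f^*}(T,\cdot)$ is immediate, but the sharper radial monotonicity used in step two requires some care (a parabolic maximum principle on $\partial_r u_{f^*}$); the remaining bang-bang and analyticity ingredients are routine.
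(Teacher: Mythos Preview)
Your argument is correct and follows the same overall strategy as the paper: assume a $\prec$-maximal $f^*$ exists, reduce (after replacing $f^*$ by $(f^*)^\#$) to the statement that $f^*$ must simultaneously maximise two linear functionals on $\mathcal F$, identify each maximiser uniquely via the bathtub principle together with the strict radial monotonicity of the corresponding adjoint kernel, and obtain a contradiction by comparing the two adjoints as $t\to T^-$. The only difference lies in the choice of the two functionals. The paper tests against two smooth radial cut-offs $\varphi$ (narrow support) and $\psi$ (wide support), solves the backward heat equation for $p_\varphi,p_\psi$, and compares the radii $r_\varphi(T)<R/4<R/2<r_\psi(T)$. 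You instead pass directly to the extreme endpoints $r\to 0$ and $r=R$ of the $\prec$ relation, obtaining the explicit kernels $K_1(s,y)=p_\O(T-s,0,y)$ and $K_2(s,y)=\int_\O p_\O(T-s,x,y)\,dx$, and then use $\rho_1(s)\to 0$, $\rho_2(s)\to R$ as $s\to T^-$. Your route avoids the step-function approximation lemma (the paper's Lemma~\ref{Le:Varphi}) and gives a slightly more explicit picture via the heat kernel, at the price of the radial-monotonicity step for $u_{f^*}(T,\cdot)$ that you correctly flag; note, however, that the paper also uses this fact (when writing $u_{f^*}=u_{f^*}^\#$) without spelling out its proof, so this is not an additional burden of your approach.
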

%
%
%
%
%

\section{Proof of theorem \ref{Th:Main}}
\paragraph{Strategy of proof and auxiliary problems} To prove the result we will argue by contradiction and assume that there exists $f^*\in \mathcal F$ such that \eqref{Eq:Ibey} holds for a certain time horizon $T>0$. By the parabolic Talenti inequality, we may assume that $f^*=(f^*)^\#$ so that $u_{f^*}=u_{f^*}^\#$. By definition of $f^*$ we know that, for any $f\in \mathcal F$ and any $r\in [0,R]$, 
\[\int_0^r{\xi^{d-1}} u_f^\#(t,{\xi})d{\xi}\leq \int_0^r {\xi^{d-1}}u_{f^*}(t,{\xi})d{\xi}.\] In particular, for any $r\in [0;R]$, ${f^*}$ is a solution of the optimisation problem
\begin{equation}\label{Eq:Pvr}\tag{$P(r)$}\max_{f\in \mathcal F}\left(\max_{E\subset \O\,, \operatorname{Vol}(E)=\omega_d r^d}\int_E u_f\right),\end{equation} where $\omega_d=\mathrm{Vol}(\B(0,1)).$

To prove Theorem \ref{Th:Main}, it suffices to show that no $f^*\in \mathcal F$ can solve \eqref{Eq:Pvr} for all $r\in[0;R]$.

\paragraph{Proof of Theorem \ref{Th:Main}}
Following the discussion above we prove the following result:
\begin{lemma}\label{Le:Varphi}
Let $f^*\in \mathcal F$ be such that, for any $r\in (0,R)$, $f^*$ is a solution of \eqref{Eq:Pvr}. Then, for any radially symmetric, {non-increasing}, non-negative function $\varphi\in \mathscr C^\infty(\O)$, $f^*$ is a solution of 
\[ \max_{f\in \mathcal F}\int_\O u_f(T,\cdot)\varphi.\]
\end{lemma}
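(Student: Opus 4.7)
The plan is to use the layer-cake representation of $\varphi$ to reduce the integral $\int_\O u_f(T,\cdot)\varphi$ to a continuous superposition of integrals over centred balls, and then to apply the assumption that $f^*$ solves $\eqref{Eq:Pvr}$ for every $r\in(0,R)$.

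\smallskip
\textbf{Step 1: Layer-cake decomposition.} Since $\varphi$ is non-negative, one has the pointwise identity
\[
\varphi(x)=\int_0^{\|\varphi\|_\infty}\mathbf 1_{\{\varphi>s\}}(x)\,ds.
\]
Because $\varphi$ is radially symmetric and non-increasing, each super-level set $\{\varphi>s\}$ is an open centred ball $\B(0;\rho(s))$ for some radius $\rho(s)\in[0,R]$, $s\in(0,\|\varphi\|_\infty)$. Applying Fubini, for any $f\in\mathcal F$,
\[
\int_\O u_f(T,\cdot)\varphi=\int_0^{\|\varphi\|_\infty}\!\!\int_{\B(0;\rho(s))} u_f(T,\cdot)\,ds.
\]

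\smallskip
\textbf{Step 2: Apply the hypothesis.} For each fixed $s$, the Hardy--Littlewood inequality \eqref{Eq:HL} applied to $u_f(T,\cdot)$ and to $\mathbf 1_{\B(0;\rho(s))}$ (which is already its own Schwarz rearrangement) yields
\[
\int_{\B(0;\rho(s))}u_f(T,\cdot)\leq\max_{E\subset\O,\,\mathrm{Vol}(E)=\omega_d\rho(s)^d}\int_E u_f(T,\cdot).
\]
By the assumption that $f^*$ solves $\eqref{Eq:Pvr}$ with $r=\rho(s)$, the right-hand side is bounded by the same maximum with $u_f$ replaced by $u_{f^*}$. Since we have reduced to $f^*=(f^*)^\#$, so that $u_{f^*}(T,\cdot)$ is itself radially symmetric and non-increasing, that maximum is attained by $E=\B(0;\rho(s))$ and equals $\int_{\B(0;\rho(s))}u_{f^*}(T,\cdot)$. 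Hence
\[
\int_{\B(0;\rho(s))}u_f(T,\cdot)\leq\int_{\B(0;\rho(s))}u_{f^*}(T,\cdot).
\]

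\smallskip
\textbf{Step 3: Integrate back.} Integrating this inequality against $ds$ on $(0,\|\varphi\|_\infty)$ and reversing Fubini (now applied to $u_{f^*}$) gives
\[
\int_\O u_f(T,\cdot)\varphi\leq\int_0^{\|\varphi\|_\infty}\!\!\int_{\B(0;\rho(s))}u_{f^*}(T,\cdot)\,ds=\int_\O u_{f^*}(T,\cdot)\varphi,
\]
which is the desired maximality of $f^*$.

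\smallskip
The only delicate point is the assertion that $u_{f^*}(T,\cdot)$ is radially non-increasing, which comes from the reduction $f^*=(f^*)^\#$ already performed in the paper combined with the spatial Talenti inequality; everything else is the routine layer-cake mechanism which converts the pointwise family of constraints $\eqref{Eq:Pvr}$, $r\in(0,R)$, into a single maximisation against any radially non-increasing smooth weight $\varphi$.
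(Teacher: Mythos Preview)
Your proof is correct and follows essentially the same strategy as the paper: both decompose $\varphi$ as a non-negative superposition of indicators of centred balls and then apply the hypothesis ball by ball. The only difference is presentational---the paper uses a discrete step-function approximation (with an Abel-summation rewriting) followed by a limit, whereas you use the continuous layer-cake formula directly; both implicitly rely on the reduction $f^*=(f^*)^\#$ made just before the lemma so that $u_{f^*}(T,\cdot)$ is radially non-increasing, a point you rightly flag.
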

\begin{proof}[Proof of lemma \ref{Le:Varphi}]
Let us fix $\varphi$  in the conditions of the lemma. We can approximate $\varphi$ by an increasing sequence of radially symmetric step-functions {$\{\phi_k\}_{k\in \N}$ as follows:
define, for an integer $k\geq 1$,
\[r_{k,j}:=\frac{j}{k}R \quad (j=0,\dots,k)\,, \alpha_{k,j}:=\varphi(r_{k,j+1}) \quad (j=0,\dots,k-1)
\]
and set 
\[ \phi_k:=\sum_{j=0}^{k-1}\alpha_{k,j}\mathds 1_{\B(0;r_{k,j+1})\backslash \B(0;r_{k,j})}.\]
However, from this decomposition it appears that we may rewrite $\phi_k$ as 
\[ \phi_k=\sum_{j=1}^k\beta_{k,j}\mathds 1_{\B(0;r_{k,j})}\text{ where, for any $j\in \{0,\dots,k\}$, $\beta_{k,j}\geq 0$.}\]
Indeed it suffices to define the coefficients $\beta_{k,j}$ as 
\[ \beta_{k,k}:=\alpha_{k,k-1}\text{ and, for any $j\in \{1,\dots,k-1\}$, } \beta_{k,j}:=\alpha_{k,j-1}-\alpha_{k,j}\geq 0\]where the last inequality comes from the fact that $\varphi$ is non-increasing.
Consequently, for any $k\in \N$ and any $j\leq k$, \[ \beta_{k,j}\int_{\B(0,r_{k,j})} u_f(T,\cdot)\leq \beta_{k,j}\int_{\B(0,r_{k,j})} u_{f^*}(T,\cdot)\] by the definition of $f^*$.} Passing to the limit $k\to \infty$ yields the result.

\end{proof}
We single out the following optimisation problem defined for any $\varphi\in \mathscr C^\infty(\O)$:
\begin{equation}\label{Eq:PvVarphi}\tag{$P_\varphi$}
\max_{f\in \mathcal F}\int_\O u_f(T,\cdot)\varphi.\end{equation}

{To prove Theorem \ref{Th:Main}, we will need to characterise the optimisers of \eqref{Eq:PvVarphi} in certain cases. Such a characterisation can be obtained by studying the optimality conditions for \eqref{Eq:PvVarphi} which is what we now set out to do.}

{\textbf{Optimality conditions for \eqref{Eq:PvVarphi}:}}

{Define $p_\varphi$ as the unique solution of the backward heat equation 
\begin{equation}\label{Eq:Adjoint}
\begin{cases}
{\frac{\partial p_\varphi}{\partial t}}+\Delta p_\varphi=0&\text{ in }\OT\,, 
\\p_\varphi(t,\cdot)=0&\text{ on }[0;T]\times \partial \O\,, 
\\ p_\varphi(T,\cdot)=\varphi&\text{ in }\O\,,\end{cases}
\end{equation}
Multiplying \eqref{Eq:Main} by $p_\varphi$ and integrating by parts we obtain 
\begin{equation}
\forall f \in \mathcal F\,, \int_\O u_f(T,\cdot)\varphi=\iint_\OT f p_\varphi.
\end{equation}
The function $p_\varphi$ encodes the optimality conditions for \eqref{Eq:PvVarphi}. To further characterise optimisers we need some information on the level sets of the function $p_\p$. Such information is given in the following lemma:
\begin{lemma}\label{Le:RegPphi}
Assume  $\p\in \mathscr C^\infty(\O)\cap W^{1,2}_0(\O)$, $\p=\p^\#$, {$\varphi\geq 0$} and $\p$ is not constant. Then, for any $t\in [0;T)$ and for any $\tau\in \left(0;\Vert \p\Vert_{L^\infty(\O)}\right)$ the level set $\{p_\p(t,\cdot)=\tau\}$ is {a} $(d-1)$-dimensional sphere.
\end{lemma}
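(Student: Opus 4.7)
The plan is to show that $p_\varphi(t,\cdot)$ is radial and strictly decreasing in $|x|$ inside $\Omega$; once this is established, the claim on level sets is immediate.

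\textbf{Step 1 (Radial symmetry).} The Laplacian, the homogeneous Dirichlet condition and the terminal datum $\varphi=\varphi^\#$ are all invariant under the orthogonal group $O(d)$. Uniqueness of the solution to the backward heat equation \eqref{Eq:Adjoint} then forces $p_\varphi(t,\rho x)=p_\varphi(t,x)$ for every $\rho\in O(d)$, so that $p_\varphi(t,x)=P(t,|x|)$ for a suitable radial profile $P=P(t,r)$.

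\textbf{Step 2 (Radial monotonicity).} Setting $W:=\partial_r P$ and differentiating the radial form of \eqref{Eq:Adjoint} in $r$ yields
\begin{equation*}
\partial_t W+\partial_r^2 W+\frac{d-1}{r}\partial_r W-\frac{d-1}{r^2}W=0\qquad\text{on }(0;R)\times (0;T).
\end{equation*}
After the time reversal $\tau:=T-t$, this becomes a forward parabolic equation for $\tilde W(\tau,r):=W(T-\tau,r)$ whose zero-order coefficient $+\tfrac{d-1}{r^2}$ has the correct sign for the weak maximum principle. The relevant parabolic boundary data are all non-positive: $\tilde W(0,\cdot)=\partial_r\varphi\le 0$ since $\varphi=\varphi^\#$ is radially non-increasing; $\tilde W(\cdot,0)=0$ by smoothness of $p_\varphi$ at the origin; and $\tilde W(\cdot,R)\le 0$ by Hopf's lemma, using that $p_\varphi\ge 0$ vanishes on $\partial\Omega$. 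Applying the weak maximum principle on the truncated cylinder $(\delta;R)\times (0;T)$ and letting $\delta\to 0^+$, while using continuity of $\tilde W$ at $r=0$ to control the artificial inner boundary, yields $\tilde W\le 0$, so that $r\mapsto P(t,r)$ is non-increasing.

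\textbf{Step 3 (Strict decrease via analyticity).} By the classical spatial smoothing of the Dirichlet heat equation on the smooth bounded domain $\Omega$, $p_\varphi(t,\cdot)$ is real-analytic in $\Omega$ for every $t\in [0;T)$; in particular $r\mapsto P(t,r)$ is real-analytic on $(0;R)$. Suppose now that for some $t\in [0;T)$ and $\tau\in (0;\Vert\varphi\Vert_{L^\infty(\Omega)})$ the level set $\{p_\varphi(t,\cdot)=\tau\}$ contained two concentric spheres of radii $r_1<r_2$. By the monotonicity of Step 2 we would then have $P(t,\cdot)\equiv\tau$ on $[r_1;r_2]$, and real analyticity would propagate this identity to the whole interval $(0;R)$, contradicting the Dirichlet condition $P(t,R)=0$ together with $\tau>0$. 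Hence every non-empty such level set is a single $(d-1)$-dimensional sphere.

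\textbf{Main obstacle.} Step 2 is the delicate one: the zero-order coefficient $\tfrac{d-1}{r^2}$ appearing in the equation for $W$ is singular at $r=0$, so the maximum principle cannot be invoked directly on $(0;R)$ and must be applied on a truncated cylinder $(\delta;R)\times (0;T)$ with the contribution at the inner boundary $r=\delta$ absorbed by the continuity of $\partial_r P$ at the origin. The symmetry argument of Step 1 and the analytic-continuation argument of Step 3 are then comparatively routine.
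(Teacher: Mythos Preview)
Your argument is correct. The overall strategy---establish radial symmetry, then prove radial monotonicity of $p_\varphi(t,\cdot)$ via a maximum principle applied to the radial derivative---matches the paper. The execution differs in two notable ways.

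First, the paper applies the \emph{strong} maximum principle to $q_\varphi=\partial_r p_\varphi$ and concludes $q_\varphi<0$ for $t<T$ in one stroke, so that strict monotonicity (hence spherical level sets) is immediate. You instead use the \emph{weak} maximum principle to obtain only $\partial_r P\le 0$, and then invoke spatial real-analyticity of the heat flow to rule out plateaus. Both routes work; the strong maximum principle is shorter, while your analyticity argument is a nice alternative that would also suffice in settings where the strong principle is less convenient.

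Second, you write down the correct one-dimensional equation for $W=\partial_r P$, namely $\partial_t W+\partial_r^2 W+\tfrac{d-1}{r}\partial_r W-\tfrac{d-1}{r^2}W=0$, and you are careful about the singular coefficient at $r=0$, handling it by truncation. The paper states that $q_\varphi$ satisfies $\partial_t q_\varphi+\Delta q_\varphi=0$, which suppresses the zero-order term; your version is more precise, and since that extra term has the favourable sign it does not affect the conclusion. (An alternative that sidesteps the singularity entirely is to apply the strong maximum principle to the Cartesian derivative $\partial_{x_1}p_\varphi$ on the half-ball $\{x_1>0\}$, where it genuinely satisfies the heat equation with non-positive parabolic boundary data.)

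One small remark on Step~3: the lemma as stated asserts that the level set \emph{is} a sphere, but for $t<T$ the strong maximum principle gives $\max_x p_\varphi(t,x)<\Vert\varphi\Vert_{L^\infty}$, so the level set is empty for $\tau$ close to $\Vert\varphi\Vert_{L^\infty}$. Your phrasing ``every non-empty such level set'' is therefore the accurate one; the paper's own conclusion (``its level sets have zero Lebesgue measure and coincide with spheres'') should be read the same way, and this is all that is needed downstream for the bathtub principle.
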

\begin{proof}[Proof of Lemma \ref{Le:RegPphi}]
Since $\p\in \mathscr C^\infty(\O)$, standard parabolic estimates imply that $p_\phi \in \mathscr C^\infty(\OT)$. Since $\p$ is radially symmetric, so is $p_\p$. By the maximum principle, for any $t\in [0;T]$,  
\[ \frac{\partial p_\p}{\partial \nu}(t,\cdot) \leq 0\text{ on }\partial \O.\] Let $q_\p:=\frac{\partial p_\p}{\partial r}.$ We already know that 
\[\forall t \in [0;T]\,, q_\p(t,\cdot) \leq 0\text{ on }\partial \O.\] Furthermore, at $t=T$, since $\p$ is not constant and radially symmetric, non-increasing, 
\[ q_\p(T,\cdot)\leq 0\,, q_\p(T,\cdot)\neq 0.\] Differentiating \eqref{Eq:Adjoint} with respect to $r$ we get the following equation 
\[\begin{cases}
\frac{\partial q_\p}{\partial t}+\Delta q_\p=0&\text{ in }\OT\,, 
\\ q_\p\leq 0&\text{ on } [0;T]\times \partial \O\,, 
\\ q_\p(T,\cdot)\leq 0\,, q_\p(T,\cdot)\neq 0&\text{ in }\O.\end{cases}\]
By the strong maximum principle it follows that 
\[ \forall t<T\,, q_\p(t,\cdot)<0 \text{ in }\O.\] Thus, for any $t\in [0;T)$, $p_\p(t,\cdot)$ is radially decreasing. In particular, its level sets have zero Lebesgue measure and coincide with spheres.
\end{proof}
Now let us turn back to the optimality conditions for \eqref{Eq:PvVarphi}: let $f_\varphi$ be a solution of \eqref{Eq:PvVarphi}. From the bathtub principle \cite[Theorem 1.14]{LiebLoss} and the fact that $p_\varphi$ only has level sets of measure zero, it follows that there exists a Lagrange multiplier $c_\varphi\in \R$ such that, up to negligible sets,
\begin{enumerate}
\item $\{(t,x)\in \OT:\, f_\varphi(t,x)=1\}= \{(t,x)\in \OT:\, p_\varphi(t,x)>c_\varphi\}$,
\item $\{(t,x)\in \OT:\, f_\varphi(t,x)=0\}=\{(t,x)\in \OT:\, p_\varphi(t,x)<c_\varphi\}$,
\item $\{(t,x)\in \OT:\, 0<f_\varphi(t,x)<1\}$ has Lebesgue measure zero. \end{enumerate}
The constant $c_\p$ appearing is dubbed the \emph{Lagrange multiplier associated with $\p$}. We emphasise that it is a constant that depends neither on space nor on time. These conditions define $f_\varphi$ univocally. 
Furthermore, as the (time-space) dependent level-set satisfies $\mathrm{Vol}(\{f_\varphi=1\})\in (0;V_0)$ the maximum principle implies 
\begin{equation}\label{Eq:Sunrise}
0<c_\varphi<\Vert p_\varphi\Vert_{L^\infty(\OT)}\leq \Vert \varphi\Vert_{L^\infty(\O)}.\end{equation}
}

The following lemma essentially contains the proof of Theorem \ref{Th:Main}:
\begin{lemma}\label{Le:Cruise}
There exist two radially symmetric, decreasing {and non-negative functions} $\varphi\,, \psi\in \mathscr C^\infty(\O)$ such that $(P_{\varphi})$ and $(P_{\psi})$ do not have the same solutions.
\end{lemma}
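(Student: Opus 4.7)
The plan is to argue by contradiction using a spectral choice of test functions. I would set $\varphi := e_1$ and $\psi := e_1 + \varepsilon e_2$, where $e_1, e_2$ denote the first two radial Dirichlet eigenfunctions of $-\Delta$ on $\B(0;R)$ with eigenvalues $\mu_1<\mu_2$, and $\varepsilon>0$ is small enough that $\psi$ remains non-negative, smooth, and radially non-increasing. The admissibility of both $\varphi$ and $\psi$---and the fact that Lemma \ref{Le:RegPphi} applies to each---reduces to a direct check using the strict monotonicity $\partial_r e_1<0$ on $(0;R)$ and the boundedness of $e_2, \partial_r e_2$.

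Assume for contradiction that $(P_\varphi)$ and $(P_\psi)$ share a common solution $f^*$. The optimality conditions derived above, together with Lemma \ref{Le:RegPphi}, force $\{f^*=1\}$ to be a space-time ball $\{(t,x): |x|<\rho(t)\}$ with radius profile $\rho$ depending only on $f^*$, so one has simultaneously $p_\varphi(t,\rho(t))=c_\varphi$ and $p_\psi(t,\rho(t))=c_\psi$ for every $t\in[0;T)$. The eigenfunction property makes the backward heat semigroup act diagonally: $p_{e_j}(t,x)=e^{-\mu_j(T-t)}e_j(x)$, and linearity yields $p_\psi = p_\varphi + \varepsilon p_{e_2}$. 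From the $\varphi$-identity we read off $e_1(\rho(t))=c_\varphi e^{\mu_1(T-t)}$; subtracting the two level identities gives $e_2(\rho(t))=\tfrac{c_\psi-c_\varphi}{\varepsilon}e^{\mu_2(T-t)}$; eliminating the exponential yields the pointwise algebraic relation
\[
e_2(r) = K\, e_1(r)^{\mu_2/\mu_1} \qquad \text{for every } r \in \rho([0;T)),
\]
where $K$ is a fixed constant.

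The first identity shows that $\rho$ is a non-constant analytic function of $t$, so its image is a non-degenerate interval $I \subset (0;R)$. Since $e_1, e_2$ are real-analytic and $e_1>0$ on $[0;R)$, the displayed identity extends by analytic continuation from $I$ to all of $[0;R)$. If $K=0$ then $e_2(\rho(t))\equiv 0$ contradicts $\rho$ being a non-constant continuous function valued in the discrete zero set of $e_2$; if $K\neq 0$, evaluating at an interior zero $r^*\in(0;R)$ of $e_2$ gives $0 = K\,e_1(r^*)^{\mu_2/\mu_1}$, which is absurd since $e_1(r^*)>0$. Either way one reaches a contradiction.

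The main obstacle I anticipate is technical rather than conceptual: verifying that $\psi = e_1+\varepsilon e_2$ is genuinely in the admissible class (in particular, radial non-increasingness near $r=0$, where $e_1'$ vanishes, requires a short Taylor expansion of both eigenfunctions), confirming that the image of $\rho$ is a non-degenerate interval (which follows from the strict bound $0<c_\varphi<\|p_\varphi\|_\infty$ of \eqref{Eq:Sunrise} together with the strict monotonicity of $t \mapsto e^{\mu_1(T-t)}$), and justifying the real-analytic continuation for the power $e_1^{\mu_2/\mu_1}$. The conceptual heart of the argument is then the pleasant fact that the eigenfunction decomposition collapses an a priori infinite-dimensional constraint---level sets of $p_\varphi$ and $p_\psi$ must coincide along an entire space-time surface---into a one-variable algebraic identity that two independent eigenfunctions of different eigenvalues cannot satisfy.
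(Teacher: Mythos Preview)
Your approach is correct but takes a genuinely different route from the paper's. The paper's proof is entirely elementary: it chooses $\varphi$ and $\psi$ to be two smooth cut-off functions with $\varphi\equiv 0$ outside $\B(0;R/4)$ and $\psi\equiv 1$ on $\B(0;R/2)$, and simply inspects the terminal time. Since $0<c_\varphi,c_\psi<1$ and $p_\varphi(T,\cdot)=\varphi$, $p_\psi(T,\cdot)=\psi$, one reads off directly that $r_\varphi(T)<R/4$ while $r_\psi(T)>R/2$; continuity of $r_\varphi,r_\psi$ (from the strict radial monotonicity $\partial_r p<0$) then gives $f_\varphi\neq f_\psi$ in a neighbourhood of $T$. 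No spectral theory, no analytic continuation, no semigroup computation---just the observation that the terminal data already separate the two level-set radii. Your spectral route instead exploits the diagonal action of the backward heat semigroup on eigenfunctions to collapse the coincidence of level surfaces into a one-variable identity $e_2=K\,e_1^{\mu_2/\mu_1}$ on an interval, which analytic continuation and the interior zero of $e_2$ then rule out. This is more structural and shows the obstruction is global rather than localised at $t=T$, but it carries real overhead (the admissibility of $\psi=e_1+\varepsilon e_2$ near $r=0$ and near $\partial\O$, and the analyticity argument) that the paper's two-line endpoint comparison avoids.

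One minor overstatement to fix: the claim that $p_\varphi(t,\rho(t))=c_\varphi$ holds for \emph{every} $t\in[0;T)$ is too strong. If $c_\varphi>e^{-\mu_1 T}e_1(0)$ then for small $t$ the super-level set $\{p_\varphi(t,\cdot)>c_\varphi\}$ is empty, so $\rho(t)=0$ without the level identity holding there. However, the identity does hold on the non-degenerate interval $\{t:\rho(t)>0\}\supset(T-\delta,T)$, and on that interval $\rho$ is strictly monotone (hence its image is a genuine interval), which is all your analytic-continuation step needs. So the argument survives once you restrict to that sub-interval.
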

\begin{proof}[Proof of Lemma \ref{Le:Cruise}]

\textbf{Construction of $\varphi\,, \psi$ such that $f_\p\neq f_\psi$}
Let $\varphi$ be a {cut-off} function; in other words, $\varphi$ satisfies:
\begin{itemize}
\item$\varphi\in \mathscr C^\infty(\O,\R_+)$ is a radially symmetric, non-increasing function.
\item $\varphi \equiv 1$ on $\B(0,R/8)$ and is radially decreasing on $\mathbb B(0,R/4)\backslash \mathbb B(0,R/8)$.
\item$\varphi\equiv 0$ on $\B(0,R)\backslash \B(0,R/4)$.
\end{itemize}Similarly we pick $\psi $  that satisfies 
\begin{itemize}
\item$\psi\in \mathscr C^\infty(\O,\R_+)$ is a radially symmetric, non-increasing function.
\item $\psi \equiv 1$ on $\B(0,R/2)$ and is radially decreasing on $\mathbb B(0,R/2)\backslash \mathbb B(0,3R/4)$.
\item$\psi\equiv 0$ on $\B(0,R)\backslash \B(0,3R/4)$.\end{itemize}

We claim that for this $\psi$ and this $\p$ we have 
 $f_\varphi\neq f_\psi.$
 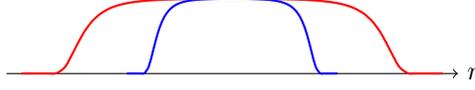
\begin{figure}
  \begin{center}

 \begin{tikzpicture}
  \draw[->] (-3, 0) -- (3, 0) node[right] {$r$};
  \draw[scale=0.5, domain=0.2:3, smooth, variable=\x, blue,thick] plot ({\x-3}, {2^(1-1/(\x*\x*\x*\x*\x)});
    \draw[scale=0.5, domain=0.2:3, smooth, variable=\x, blue,thick] plot ({-\x+3}, {2^(1-1/(\x*\x*\x*\x*\x))});
    \draw[scale=0.5, domain=0.2:3, smooth, variable=\x, red,thick] plot ({(\x-3)*2}, {2^(1-1/(\x*\x*\x*\x*\x)});
    \draw[scale=0.5, domain=0.2:3, smooth, variable=\x, red,thick] plot ({(-\x+3)*2}, {2^(1-1/(\x*\x*\x*\x*\x)});
\end{tikzpicture}
\caption{In blue, the graph of $\p$. In red, the graph of $\psi$.}
\end{center}

\end{figure}

To see why $f_\varphi\neq f_\psi$, let $c_\varphi\,, c_\psi$ be the Lagrange multipliers associated, respectively, with $\varphi$ and $\psi$. {Recall that \eqref{Eq:Sunrise} gives
\[ 0<c_\p\,, c_\psi<\max\left(\Vert \varphi\Vert_{L^\infty(\O)},\Vert \psi\Vert_{L^\infty(\O)}\right)=1.\] Define, for any $r>0$, $\mathbb S(0;r)$ as the $(d-1)$-dimensional sphere or radius $r$ (\emph{i.e.} $\mathbb S(0;r)=\{x\in \R^d,\, \Vert x\Vert =r\}$) and let, } for any $t\in (0;T)$, $r_\varphi(t)$ (resp. $r_\psi(t)$) be such that 
\[\{p_\varphi(t,\cdot)=c_\varphi\}=\mathbb S(0,r_\varphi(t)) \text{ (resp. }\{p_\psi(t,\cdot)=c_\psi\}=\mathbb S(0,r_\psi(t))  \text{)}.\] As $c_\p\,, c_\psi>0$ we have 
\[ \sup_{t\in [0;T]}r_\p(t)\,, r_\psi(t)<{R}.\]
As  we also have
\[ c_\varphi\,, c_\psi<1\]  we deduce
\[\inf_{t\in[0;T]}r_\varphi(0)\,, r_\psi(0)>0.\] Since, by parabolic regularity, $p_\varphi$ and $p_\psi$ are $\mathscr C^\infty$ and radially decreasing in the sense that $\partial_r p_\psi\,, \partial_r p_\varphi<0$ in $\OT$, $r_\varphi$ and $r_\psi$ are continuous\footnote{One could also observe that $r_\p$ solves the differential equation $dr_\p/dt=-\partial_tp/\partial_r\p$. Since $r_\p$ is uniformly bounded away from 0, we also get the fact that $r_\p$ is $\mathscr C^1$.} in $[0;T]$. 

Finally, since $\varphi=0$ on $\mathbb B(0,R)\backslash \B(0,R/4)$ we have $r_\varphi(T)<R/4$. Similarly, since $\psi\equiv 1$ on $\mathbb B(0,R/2)$ we have $r_\psi(T)> R/2$. Consequently, ${r_\varphi}\neq r_\psi$ in a neighbourhood of $T$. {But now recall that from the optimality conditions of ($P_\varphi$)-($P_\psi$), we have
\[ f_\varphi(t,x)=\mathds 1_{\B(0;r_\varphi(t))}(x)\,, f_\psi(t,x)=\mathds 1_{\B(0;r_\psi(t))}(x).\] As $r_\varphi\neq r_\psi$ in a neighbourhood of $T$, } $f_\varphi\neq f_\psi$. {This concludes the proof of the Theorem.}

\end{proof}
%

\bibliographystyle{abbrv}
\bibliography{BiblioTwoPhase-Parabolic}

\begin{thebibliography}{10}

\bibitem{Alvino1986}
A.~Alvino, P.~Lions, and G.~Trombetti.
\newblock A remark on comparison results via symmetrization.
\newblock {\em Proceedings of the Royal Society of Edinburgh: Section A
  Mathematics}, 102(1-2):37--48, 1986.

\bibitem{Alvino1990}
A.~Alvino, P.-L. Lions, and G.~Trombetti.
\newblock Comparison results for elliptic and parabolic equations via {S}chwarz
  symmetrization.
\newblock {\em Annales de l'Institut Henri Poincare (C) Non Linear Analysis},
  7(2):37--65, Mar. 1990.

\bibitem{alvino1991}
A.~Alvino, P.-L. Lions, and G.~Trombetti.
\newblock Comparison results for elliptic and parabolic equations via
  symmetrization: a new approach.
\newblock {\em Differential Integral Equations}, 4(1):25--50, 1991.

\bibitem{AlvinoTrombettiLions}
A.~Alvino, P.-L. Lions, and G.~Trombetti.
\newblock Comparison results for elliptic and parabolic equations via
  symmetrization: a new approach.
\newblock {\em Differential Integral Equations}, 4(1):25--50, 1991.

\bibitem{AlvinoNitschTrombetti}
A.~Alvino, C.~Nitsch, and C.~Trombetti.
\newblock A {T}alenti comparison result for solutions to elliptic problems with
  {R}obin boundary conditions, 2019.

\bibitem{Alvino2010}
A.~Alvino, R.~Volpicelli, and B.~Volzone.
\newblock Comparison results for solutions of nonlinear parabolic equations.
\newblock {\em Complex Variables and Elliptic Equations}, 55(5-6):431--443,
  Apr. 2010.

\bibitem{Bandle}
C.~Bandle.
\newblock {\em Isoperimetric Inequalities and Applications}.
\newblock Monographs and studies in mathematics. Pitman, 1980.

\bibitem{Chiacchio}
F.~Chiacchio.
\newblock {Comparison results for linear parabolic equations in unbounded
  domains via Gaussian symmetrization}.
\newblock {\em Differential and Integral Equations}, 17(3-4):241 -- 258, 2004.

\bibitem{Hamel2011}
F.~Hamel, N.~Nadirashvili, and E.~Russ.
\newblock Rearrangement inequalities and applications to isoperimetric problems
  for eigenvalues.
\newblock {\em Annals of Mathematics}, 174(2):647--755, sep 2011.

\bibitem{Kawohl}
B.~Kawohl.
\newblock {\em Rearrangements and Convexity of Level Sets in {PDE}}.
\newblock Springer Berlin Heidelberg, 1985.

\bibitem{Kesavan1988}
S.~Kesavan.
\newblock Some remarks on a result of talenti.
\newblock {\em Annali della Scuola Normale Superiore di Pisa - Classe di
  Scienze}, 15(3):453--465, 1988.

\bibitem{Kesavan}
S.~Kesavan.
\newblock {\em Symmetrization and Applications}.
\newblock {WORLD} {SCIENTIFIC}, Apr. 2006.

\bibitem{Langford}
J.~Langford.
\newblock {\em Comparison Theorems in Elliptic Partial Differential Equations
  with Neumann Boundary Conditions}.
\newblock PhD thesis, Washington University, 2012.

\bibitem{LiebLoss}
E.~Lieb and M.~Loss.
\newblock {\em Analysis}.
\newblock American Mathematical Society, Providence, Rhode Island, 2001.

\bibitem{Mazari2022}
I.~Mazari.
\newblock Quantitative estimates for parabolic optimal control problems under
  l$\infty$ and l1 constraints in the ball: Quantifying parabolic isoperimetric
  inequalities.
\newblock {\em Nonlinear Analysis}, 215:112649, Feb. 2022.

\bibitem{Mazari2022MINE}
I.~Mazari.
\newblock Some comparison results and a partial bang-bang property for
  two-phases problems in balls.
\newblock {\em Mathematics in Engineering}, Special Issue: Calculus of
  Variations and Nonlinear Analysis (Ed: D. Mazzoleni and B. Pellacci), 2022.

\bibitem{RakotosonMossino}
J.~Mossino and J.~M. Rakotoson.
\newblock Isoperimetric inequalities in parabolic equations.
\newblock {\em Annali della Scuola Normale Superiore di Pisa - Classe di
  Scienze}, Ser. 4, 13(1):51--73, 1986.

\bibitem{Sannipoli2022}
R.~Sannipoli.
\newblock Comparison results for solutions to the anisotropic laplacian with
  robin boundary conditions.
\newblock {\em Nonlinear Analysis}, 214:112615, Jan. 2022.

\bibitem{Talenti}
G.~Talenti.
\newblock Elliptic equations and rearrangements.
\newblock {\em Annali della Scuola Normale Superiore di Pisa - Classe di
  Scienze}, Ser. 4, 3(4):697--718, 1976.

\bibitem{Talenti2016}
G.~Talenti.
\newblock The art of rearranging.
\newblock {\em Milan Journal of Mathematics}, 84(1):105--157, Mar. 2016.

\bibitem{TrombettiVazquez}
G.~Trombetti and J.~L. Vazquez.
\newblock A symmetrization result for elliptic equations with lower-order
  terms.
\newblock {\em Annales de la Facult\'e des sciences de Toulouse :
  Math\'ematiques}, Ser. 5, 7(2):137--150, 1985.

\bibitem{Vazquez}
J.~L. Vazquez.
\newblock Sym{\'e}trisation pour $u_t={\Delta}\varphi(u)$ et applications.
\newblock {\em C. R. Acad. Sci. Paris S{\'e}r. I Math.}, 295, 1982.

\end{thebibliography}

\end{document}